\newtheorem{theorem}{Theorem}[section]
\newtheorem*{theorem A}{Theorem A}
\newtheorem*{theorem B}{N\"olker's Theorem}
\theoremstyle{remark}
\theoremstyle{remark}
\theoremstyle{definition}
\numberwithin{equation}{section}
\def\({\left ( }
\def\){\right )}
\def\<{\left < }
\def\>{\right >}
\begin{document}
\title[FACTORABLE SURFACES IN PSEUDO-GALILEAN
SPACE] {NON-ZERO CONSTANT CURVATURE FACTORABLE SURFACES IN PSEUDO-GALILEAN
SPACE}
\author{Muhittin Evren Aydin$^1$, Mihriban Kulahci $^2$, Alper Osman Ogrenmis$^3$}
\address{$^{1,2,3}$ Department of Mathematics, Faculty of Science, Firat University, Elazig, 23200, Turkey}
\email{meaydin@firat.edu.tr, mihribankulahci@gmail.com, aogrenmis@firat.edu.tr}
\thanks{}
\subjclass[2000]{53A35, 53B25, 53B30, 53C42.}
\keywords{Pseudo-Galilean space, factorable surface, Gaussian
curvature, mean curvature.}

\begin{abstract}
Factorable surfaces, i.e. graphs associated with the product of two functions of one variable, constitute a wide class of surfaces. Such surfaces in the pseudo-Galilean space with zero Gaussian and mean curvature were obtained in \cite {1}. In this study, we provide new classification results relating to the factorable surfaces with non-zero Gaussian and mean curvature. 
\end{abstract}

\maketitle

\section{Introduction}

One of challenging problems in classical differential geometry has
been obtaining surfaces with prescribed Gaussian $(K)$ and mean curvature $(H)$. Let $z=z\left( x,y\right) $ be a
real-valued function of two independent variables. In particular; for the immersed graph of $z$
into a Euclidean space $\mathbb{E}^{3},$ such a problem is reduced to solve
the \textit{Monge-Amp\`{e}re equation} given by (\cite{25,28})
\begin{equation*}
\det \left( \frac{\partial z}{\partial u_{i}\partial u_{j}}\right) =K\left( 1+\left\vert \nabla z\right\vert ^{2} \right )^2, \text{ } u_{1}=x, \text{ }u_{2}=y 
\end{equation*} 
and the \textit{equation of mean curvature type} in
divergence form
\begin{equation*}
 \text{div} \left( \frac{\nabla z}{\sqrt{1+\left\vert
\nabla z\right\vert ^{2}}}\right) =H, 
\end{equation*} 
where $\nabla $ denotes the gradient
of $\mathbb{E}^{2}$ (\cite{18,26,27}). These equations are also related to the branches such as economics, meteorology, oceanography etc. \cite{4}-\cite{8}.

Recall that the graph surfaces are also known as \textit{Monge surfaces} (see \cite{14}, p. 398). In this
study, we deal with a special Monge surface, namely \textit{factorable
surface} that is graph of the function $z\left( x,y\right) =f\left( x\right)
g\left( y\right) $. Such surfaces with $K,H=const.$ in various ambient spaces have been
classified in \cite{3,13,15,17,19,29,32,33}. Our purpose is to analyze the factorable surfaces
in the pseudo-Galilean space $\mathbb{G}_{3}^{1}$ that is one of real
Cayley-Klein spaces (for details, see \cite{12,16,24,30}). As distinct from
the other ambient spaces, there exist two different kinds of factorable surfaces
arising from the absolute figure of $\mathbb{G}_{3}^{1}.$ Explicitly, a
Monge surface in $\mathbb{G}_{3}^{1}$ is said to be \textit{factorable} if
it is given in one of the explicit forms%
\begin{equation*}
\Omega _{1}:z\left( x,y\right) =f\left( x\right) g\left( y\right) \text{ and 
}\Omega _{2}:x\left( y,z\right) =f\left( y\right) g\left( z\right) .
\end{equation*}%
We call $\Omega _{1}$ and $\Omega _{2}$ the \textit{factorable surface of
first }and\textit{\ second kind}, respectively. Note that these surfaces
have different geometric structures in $\mathbb{G}_{3}^{1}$ (such as metric,
curvature etc.). Flat and minimal ($K,H=0$) factorable surfaces in $\mathbb{G}_{3}^{1}$
were presented in \cite{1}. Still, it is an open problem to obtain such surfaces
with $K,H=const. \neq 0$. The present paper is
devoted to solve this problem.

\section{Preliminaries}

In this section, some basics of the Galilean geometry shall be provided from \cite%
{2}, \cite{9}-\cite{11}, \cite{20,21,31}. In particular, the local theory of
immersed surfaces into a pseudo-Galilean space was well-structured in \cite%
{22}.

Let $P_{3}\left( \mathbb{R}\right) $ denote the projective 3-space and $%
\left( u_{0}:u_{1}:u_{2}:u_{3}\right) $ the homogeneous coordinates in $%
P_{3}\left( \mathbb{R}\right) .$ The \textit{pseudo-Galilean 3-space} $\mathbb{G}%
_{3}^{1}$ is a metric space constructed within $P_{3}\left( \mathbb{R}%
\right) $ having the absolute figure $\left\{ \sigma ,l,\epsilon \right\} ,$
where $\sigma $ implies the \textit{absolute plane} of $\mathbb{G}_{3}^{1}$, $l$ \textit{absolute line} in $\sigma $ and $\epsilon $ is the%
\textit{\ hyperbolic involution} of the points of $l$. These arguments are
given by $\sigma :u_{0}=0,$ $l :u_{0}=u_{1}=0$ and 
\begin{equation*}
\epsilon :\left(
u_{0}:u_{1}:u_{2}:u_{3}\right) \longmapsto \left(
u_{0}:u_{1}:u_{3}:u_{2}\right) .
\end{equation*}
The affine model of $\mathbb{G}_{3}^{1}$ can be introduced by changing homogenous coordinates with affine coordinates:
\begin{equation*}
\left(
u_{0}:u_{1}:u_{2}:u_{3}\right) =\left( 1:x:y:z\right) .
\end{equation*} 
In terms of the affine coordinates, the \textit{group of motions} is defined by%
\begin{equation}
\left\{ 
\begin{array}{l}
x^{\prime }=a_{1}+x, \\ 
y^{\prime }=a_{2}+a_{3}x+\left( \cosh \theta \right) y+\left( \sinh \theta
\right) z, \\ 
z^{\prime }=a_{4}+a_{5}x+\left( \sinh \theta \right) y+\left( \cosh \theta
\right) z,%
\end{array}%
\right.   \tag{2.1}
\end{equation}%
where $a_{i},$ $i\in \left\{ 1,...,5\right\} $ and $\theta $ are some
constants. The \textit{pseudo-Galilean distance} is introduced with respect
to the absolute figure, namely 
\begin{equation*}
d\left( x,y\right) =\left\{ 
\begin{array}{ll}
\left\vert x_{2}-x_{1}\right\vert , & \text{if }x_{1} \neq x_{2}, \\ 
\sqrt{\left\vert \left( y_{2}-y_{1}\right) ^{2}-\left( z_{2}-z_{1}\right)
^{2}\right\vert }, & \text{if }x_{1}=x_{2},%
\end{array}%
\right. 
\end{equation*}%
where $x=\left( x_{1},y_{1},z_{1}\right) $ and $y=\left(
x_{2},y_{2},z_{2}\right) .$ Note that this metric (also the absolute figure) is invariant under (2.1).

A plane is said to be \textit{pseudo-Euclidean} if it satisfies the equation 
$x=const.$ Otherwise, it is called \textit{isotropic plane}. A pseudo-Euclidean plane basically has Minkowskian metric while an isotropic plane has Galilean metric, i.e. parabolic measures of distances and angles. Contrary to its denotation, the \textit{isotropic vectors} are contained in the pseudo-Euclidean
plane $x=0$ and, up to the induced Minkowskian metric on this plane, such
vectors are categorized by their causal characters, i.e. \textit{spacelike}, 
\textit{timelike} and \textit{lightlike}. For further details of the Minkowskian geometry, see \cite {23}.

An immersed surface into $\mathbb{G}_{3}^{1}$\ is given by the mapping%
\begin{equation*}
r:D\subseteq \mathbb{R}^{2}\longrightarrow \mathbb{G}_{3}^{1},\text{ }\left(
u_{1},u_{2}\right) \longmapsto \left( x\left( u_{1},u_{2}\right) ,y\left(
u_{1},u_{2}\right) ,z\left( u_{1},u_{2}\right) \right) 
\end{equation*}%
and such a surface is said to be \textit{admissible} (i.e without pseudo-Euclidean tangent
plane) if $x_{,i}=\frac{\partial x}{\partial u_{i}}\neq 0$ for some $i=1,2.$ The first fundamental form is
given by%
\begin{equation*}
ds^{2}=\left( \mathfrak{g}_{1}du_{1}+\mathfrak{g}_{2}du_{2}\right)
^{2}+\omega \left( \mathfrak{h}_{11}du_{1}^{2}+2\mathfrak{h}%
_{12}du_{1}du_{2}+\mathfrak{h}_{22}du_{2}^{2}\right) ,
\end{equation*}%
where $\mathfrak{g}_{i}=x_{,i}$, $\mathfrak{h}%
_{ij}=y_{,i}y_{,j}+z_{,i}z_{,j},$ $i,j=1,2,$ and 
\begin{equation*}
\omega =\left\{ 
\begin{array}{ll}
0, & \text{if }du_{1}:du_{2}\text{ is non-isotropic direction,} \\ 
1, & \text{if }du_{1}:du_{2}\text{ is isotropic direction.}%
\end{array}%
\right. 
\end{equation*}

A side tangent vector field in the tangent plane of the surface $r$ is of the form $x_{,1}r_{,2}-x_{,2}r_{,1}$. Its pseudo-Galilean norm corresponds to
\begin{equation*}
W=\sqrt{\left\vert \left( x_{,1}y_{,2}-x_{,2}y_{,1}\right) ^{2}-\left(
x_{,1}z_{,2}-x_{,2}z_{,1}\right) ^{2}\right\vert }.
\end{equation*}
A surface with $W=0$ is said to be \textit{lightlike}. Throughout the study, all immersed admissible surfaces shall be assumed to be non-lightlike. Then the vector given by%
\begin{equation*}
S=\frac{x_{,1}r_{,2}-x_{,2}r_{,1}}{W}=\frac{1}{W}\left(
0,x_{,1}y_{,2}-x_{,2}y_{,1},x_{,1}z_{,2}-x_{,2}z_{,1}\right) ,
\end{equation*}%
satisfies $S\cdot S=\varepsilon =\left\{ -1,1\right\} ,$ where $"\cdot "$
denotes the Minkowskian scalar product. Hence a surface is said to be 
\textit{spacelike} (\textit{timelike}) if $\varepsilon =1$ ($\varepsilon =-1$%
). The normal vector field is defined as%
\begin{equation*}
N=\frac{1}{W}\left(
0,x_{,1}z_{,2}-x_{,2}z_{,1},x_{,1}y_{,2}-x_{,2}y_{,1}\right) 
\end{equation*}%
such that $N\cdot N=-\varepsilon .$ The second fundamental form is $%
II=\sum_{i,j=1}^{2}L_{ij}du_{i}du_{j},$ where if $\mathfrak{g}_{1}\neq 0$ 
\begin{equation*}
L_{ij}=\frac{\varepsilon }{\mathfrak{g}_{1}}\left( \mathfrak{g}_{1}\left(
0,y_{,ij},z_{,ij}\right) -\mathfrak{g}_{i,j}\left( 0,y_{,1},z_{,1}\right)
\right) \cdot N,\text{ }
\end{equation*}%
otherwise%
\begin{equation*}
L_{ij}=\frac{\varepsilon }{\mathfrak{g}_{2}}\left( \mathfrak{g}_{2}\left(
0,y_{,ij},z_{,ij}\right) -\mathfrak{g}_{i,j}\left( 0,y_{,2},z_{,2}\right)
\right) \cdot N
\end{equation*}%
for $y_{,ij}=\frac{\partial ^{2}y}{\partial u_{i}\partial u_{j}}$, $1\leq i,j\leq 2.$ Consequently, the \textit{Gaussian }and \textit{mean curvature} are defined
as%
\begin{equation*}
K=-\varepsilon \frac{L_{11}L_{22}-L_{12}^{2}}{W^{2}}\text{ and }%
H=-\varepsilon \frac{\mathfrak{g}_{2}^{2}L_{11}-2\mathfrak{g}_{1}\mathfrak{g}_{2}L_{12}+\mathfrak{g}_{1}^{2}L_{22}}{%
2W^{2}}.
\end{equation*}%
A surface is said to have \textit{constant Gaussian} (resp. \textit{mean}) 
\textit{curvature} if $K$ (resp. $H$) is a constant function identically. In particular, it is said to be \textit{%
flat }(resp. \textit{minimal}) if the constant function vanishes.

\section{Factorable Surfaces of First Kind}

Let us consider the factorable surface of first kind in $\mathbb{G}_{3}^{1}$
given in explicit form $\Omega _{1}:z\left( x,y\right) =f\left( x\right)
g\left( y\right) .$ Our purpose is to describe the surface whose $K=const.\neq0$ and $H=const.\neq0$. For
this, firstly we can give the following result:

\begin{theorem} Let a factorable surface of first kind in $\mathbb{G}%
_{3}^{1}$ have non-zero constant Gaussian curvature $K_{0}.$ Then we have:
\begin{equation*}
z\left( x,y\right) =\pm \tanh \left( \sqrt{\left \vert K_{0} \right \vert }x+\lambda _{1}\right)
\left( y+\lambda _{2}\right) ,\text{ }\lambda _{1},\lambda _{2}\in \mathbb{R}.
\end{equation*}
\end{theorem}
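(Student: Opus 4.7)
The plan is to derive the PDE governing non-zero constant Gaussian curvature for $z=f(x)g(y)$, to show that it forces $g''\equiv 0$, and then to integrate the resulting ODE in $f$.

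For $r(x,y)=(x,y,f(x)g(y))$ one has $\mathfrak{g}_{1}=1$, $\mathfrak{g}_{2}=0$, and $\mathfrak{g}_{i,j}\equiv 0$ because $x=u_{1}$; the Section~2 formulas collapse to $L_{ij}=-\varepsilon z_{,ij}/W$ with $W^{2}=\varepsilon\,(1-f^{2}(g')^{2})$. Hence
\[
K=-\varepsilon\,\frac{ff''gg''-(f'g')^{2}}{(1-f^{2}(g')^{2})^{2}},
\]
and imposing $K\equiv K_{0}$, with $\tilde K:=-\varepsilon K_{0}\ne 0$, produces the master equation
\[
ff''\,gg''-(f'g')^{2}=\tilde K\,(1-f^{2}(g')^{2})^{2}. \qquad (\dagger)
\]
A constant $f$ or $g$ makes the left-hand side vanish while the right-hand side stays nonzero, so neither factor can be taken constant.

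The principal difficulty is showing that $g$ must be affine. Suppose $g''$ is not identically zero and restrict to an open interval where $g'g''\ne 0$; on such an interval $(g')^{2}$ is non-constant because $((g')^{2})'=2g'g''\ne 0$. Differentiating $(\dagger)$ in $y$ and dividing by $g'g''$ yields
\[
Q(y)\,ff''=2(f')^{2}-4\tilde K f^{2}+4\tilde K f^{4}(g'(y))^{2}, \qquad (\ddagger)
\]
with $Q(y):=1+gg'''/(g'g'')$. Choosing $y_{1},y_{2}$ with $(g'(y_{1}))^{2}\ne(g'(y_{2}))^{2}$ and subtracting the two instances of $(\ddagger)$ forces $f''=\mu f^{3}$ for some constant $\mu$ (the alternative $f\equiv 0$ being absurd). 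One integration gives $(f')^{2}=(\mu/2)f^{4}+C_{1}$. Plugging $f''=\mu f^{3}$ and this first integral back into $(\ddagger)$ produces
\[
\bigl[\mu(Q(y)-1)-4\tilde K(g'(y))^{2}\bigr]f(x)^{4}+4\tilde K f(x)^{2}-2C_{1}\equiv 0.
\]
Since $f$ is non-constant, $\{1,f^{2},f^{4}\}$ is linearly independent as functions of $x$; each coefficient must vanish, but the middle one is $4\tilde K\ne 0$, a contradiction. Therefore $g''\equiv 0$, so $g(y)=ay+b$ with $a\ne 0$.

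Finally, substituting $g(y)=ay+b$ into $(\dagger)$ collapses it to the autonomous ODE $(f')^{2}=-(\tilde K/a^{2})(1-a^{2}f^{2})^{2}$, which admits real solutions only when $\tilde K<0$, equivalently $\varepsilon K_{0}>0$; in that case $-\tilde K=|K_{0}|$. The substitution $h:=af$ turns it into $h'=\pm\sqrt{|K_{0}|}\,(1-h^{2})$, whose integral curves are $h(x)=\pm\tanh(\sqrt{|K_{0}|}\,x+\lambda_{1})$. Since $z=fg=h\,(y+b/a)$, setting $\lambda_{2}:=b/a$ recovers the claimed form $z(x,y)=\pm\tanh(\sqrt{|K_{0}|}\,x+\lambda_{1})(y+\lambda_{2})$.
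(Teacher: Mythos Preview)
Your argument is correct and follows the same overall architecture as the paper: show that $g$ must be affine, then integrate the resulting autonomous ODE for $f$.  The difference is in how the case $g''\neq 0$ is eliminated.  The paper first disposes of $f''=0$ separately (its Case a), and for $f''\neq 0$, $g''\neq 0$ divides the master equation by $ff''(g')^{2}$, differentiates in \emph{both} $x$ and $y$, and obtains a polynomial identity $-(1/(ff''))'+(f^{3}/f'')'(g')^{4}=0$; vanishing of the coefficients forces $f^{4}=\text{const.}$, a contradiction.  You instead differentiate once in $y$, compare $(\ddagger)$ at two $y$-values with distinct $(g')^{2}$ to extract the ODE $f''=\mu f^{3}$, integrate to $(f')^{2}=(\mu/2)f^{4}+C_{1}$, and substitute back to get a polynomial in $f^{2}$ whose coefficient $4\tilde K$ of $f^{2}$ is nonzero.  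Your route avoids the preliminary case split on $f$ and the mixed second differentiation, at the price of one extra integration and back-substitution; it also keeps explicit track of the sign $\tilde K=-\varepsilon K_{0}$ that the paper's formula (3.1) suppresses, which is why your final step makes transparent the constraint $\varepsilon K_{0}>0$ needed for real solutions.
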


\begin{proof} Assume that $\Omega_{1}$ has
non-zero constant Gaussian curvature $K_{0}$. Hence, we get a relation as
follows:%
\begin{equation}
K_{0}= \frac{fgf^{\prime \prime }g^{\prime \prime }-\left(
f^{\prime }g^{\prime }\right) ^{2}}{\left[ 1-\left( fg^{\prime }\right) ^{2}%
\right] ^{2}},  \tag{3.1}
\end{equation}%
where $f^{\prime }=\dfrac{df}{dx},$ $g^{\prime }=\dfrac{dg}{dy},$ etc. $%
K_{0} $ vanishes identically when $f$ or $g$ is a constant function. Then $f$
and $g$ must be non-constant functions. We distinguish two cases for the equation (3.1):

\begin{itemize}
\item[\textbf{Case a.}]
$f^{\prime }=f_{0},$ $f_{0}\in \mathbb{R}-\left\{ 0\right\}
.$ Thereby (3.1) turns into the following\ polynomial equation on $\left(
g^{\prime }\right) $:%
\begin{equation*}
K_{0}+\left( f_{0}^{2}-2K_{0}f^{2}\right) \left( g^{\prime }\right)
^{2}+K_{0}f^{4}\left( g^{\prime }\right) ^{4}=0,
\end{equation*}%
which yields a contradiction.

\item[\textbf{Case b.}] $f^{\prime \prime }\neq 0.$ We have again two cases:

\begin{itemize}
\item[\textbf{Case b.1. }] $g^{\prime }=g_{0},$ $g_{0}\in \mathbb{R}-\left\{
0\right\} .$ Then (3.1) leads to%
\begin{equation}
\pm \sqrt{\left \vert K_{0} \right \vert}=\frac{g_{0}f^{\prime }}{1-\left( g_{0}f\right)^{2}}.  \tag{3.2}
\end{equation}%
After solving (3.2), we obtain%
\begin{equation*}
f\left( x\right) =\pm \frac{1}{g_{0}}\tanh \left( \sqrt{\left \vert K_{0} \right \vert}x+\lambda
_{1}\right) , \text{ }\lambda _{1}\in \mathbb{R}.
\end{equation*}

\item[\textbf{Case b.2. }]$g^{\prime \prime }\neq 0.$ Then (3.1) can be arranged
as the following:%
\begin{equation}
\frac{K_{0}\left[ 1-\left( fg^{\prime }\right) ^{2}\right] ^{2}}{ff^{\prime
\prime }\left( g^{\prime }\right) ^{2}}=\frac{gg^{\prime \prime }}{\left(
g^{\prime }\right) ^{2}}-\frac{\left( f^{\prime }\right) ^{2}}{ff^{\prime
\prime }}.  \tag{3.3}
\end{equation}%
The partial derivative of (3.2) with respect to $x$ and $y$ leads to a
polynomial equation on $\left( g^{\prime }\right) $:%
\begin{equation}
-\left( \dfrac{1}{ff^{\prime \prime }}\right) ^{\prime }+\left( \frac{f^{3}}{%
f^{\prime \prime }}\right) ^{\prime }\left( g^{\prime }\right) ^{4}=0. 
\tag{3.4}
\end{equation}%
Since all coefficients must vanish in (3.4), the contradiction $f^{\prime }=0
$ is obtained. Therefore the proof is completed.
\end{itemize}
\end{itemize}
\end{proof}

\begin{theorem} Let a factorable surface of first kind in $\mathbb{G}%
_{3}^{1}$ have non-zero constant mean curvature $H_{0}.$ Then the following occurs:%
\begin{equation*}
z\left( x,y\right) =f_{0}g\left( y\right) =\frac{1}{2H_{0}}\sqrt{\left(
2H_{0}y+\lambda _{1}\right) ^{2}\pm 1}+\lambda _{2},
\end{equation*}%
where $"\pm "$ happens plus (resp. minus) when the surface is timelike
(resp. spacelike). Further, $f_{0}$ is non-zero constant and $\lambda
_{1},\lambda _{2}$ some constants.
\end{theorem}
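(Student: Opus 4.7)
The strategy will mirror that of Theorem~3.1: first write the PDE expressing $H=H_0\neq 0$, then exploit one partial derivative to obtain a rigidity forcing $f$ to be constant, and finally solve the resulting ODE in $y$ by two quadratures. Parametrizing the surface as $r(x,y)=(x,y,f(x)g(y))$ with $u_1=x$, $u_2=y$, one has $\mathfrak{g}_1=1$, $\mathfrak{g}_2=0$, $\mathfrak{g}_{i,j}=0$ for all $i,j$, and $W=\sqrt{\bigl|1-(fg')^{2}\bigr|}$. The mean-curvature formula from Section~2 thus collapses to $H=-\varepsilon L_{22}/(2W^{2})$, while $L_{22}$ reduces to a multiple of $z_{,22}=fg''$. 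Assembling the pieces produces the governing equation
\begin{equation}
2H_0 W^{3}=\pm\, fg'',\qquad W^{2}=\bigl|1-(fg')^{2}\bigr|, \tag{$\ast$}
\end{equation}
where the global sign is dictated by the causal character $\varepsilon\in\{\pm 1\}$. Since $g''\equiv 0$ or $f\equiv 0$ would each force $H_0=0$, both are excluded.

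The crucial rigidity step is to show $f$ must be constant. Differentiating $(\ast)$ with respect to $x$ and using $(W^{2})_{x}=-2\varepsilon ff'(g')^{2}$ (so that $3W^{2}W_{x}=-3\varepsilon ff'(g')^{2}\,W$) gives $\mp 6\varepsilon H_0 ff'(g')^{2} W=f'g''$. Suppose for contradiction that $f'\neq 0$ on some open set. Dividing by $f'$, substituting $g''=\pm 2H_0 W^{3}/f$ from $(\ast)$, and cancelling one power of $W$ (legitimate because the surface is non-lightlike) yields $-3\varepsilon f^{2}(g')^{2}=W^{2}=\varepsilon\bigl(1-f^{2}(g')^{2}\bigr)$, which simplifies to $-2f^{2}(g')^{2}=1$, a contradiction. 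Hence $f\equiv f_0$ is constant, and $f_0\neq 0$ because $H_0\neq 0$.

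With $f\equiv f_0$, equation $(\ast)$ becomes the autonomous ODE $v'=\pm 2H_0\bigl[\varepsilon(1-v^{2})\bigr]^{3/2}$ for $v:=f_0 g'$. Separating variables and integrating (using $\int dv/(1-v^{2})^{3/2}=v/\sqrt{1-v^{2}}$ for the spacelike branch, and the analogous formula involving $(v^{2}-1)^{3/2}$ for the timelike branch) produces
\begin{equation*}
v=\pm\,\frac{2H_0 y+\lambda_1}{\sqrt{(2H_0 y+\lambda_1)^{2}\pm 1}},
\end{equation*}
with $+1$ inside the square root in the timelike case $\varepsilon=-1$ and $-1$ in the spacelike case $\varepsilon=1$, which matches the alternative announced in the statement. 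A second integration of $g'=v/f_0$ yields $g(y)=\pm(2H_0 f_0)^{-1}\sqrt{(2H_0 y+\lambda_1)^{2}\pm 1}+c$, and multiplying through by $f_0$ produces the advertised expression for $z$.

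I expect the rigidity step to be the main obstacle: equation $(\ast)$ couples $f$, $g$, and their derivatives through the non-polynomial term $W^{3}$, so separation of variables is not apparent; the argument only closes after one differentiation combined with a substitution that reuses $(\ast)$ itself to eliminate $g''$. Once $f$ is known to be constant, the remaining work is a straightforward pair of quadratures.
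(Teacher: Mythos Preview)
Your proposal is correct and follows essentially the same route as the paper: write the mean-curvature relation $2H_0W^3=fg''$ (the paper's (3.5)), differentiate in $x$ to force $f'\equiv 0$ via the contradiction $1+2(fg')^2=0$, and then integrate the resulting first-order ODE for $v=f_0g'$ twice. The only cosmetic difference is that the paper obtains equation (3.6) directly with $f'$ as an explicit factor, whereas you first differentiate and then substitute $(\ast)$ back in to eliminate $g''$; the algebra collapses to the same impossible identity either way.
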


\begin{proof} Relating to the mean curvature, we get
\begin{equation}
H_{0}=\frac{fg^{\prime \prime }}{2\left\vert 1-\left( fg^{\prime
}\right) ^{2}\right\vert ^{\frac{3}{2}}}.  \tag{3.5}
\end{equation}%
It is clear from (3.5) that $g$ is
a non-linear function. By taking parital derivative of (3.5) with respect to $%
x,$ we deduce%
\begin{equation}
f^{\prime }\left\vert 1-\left( fg^{\prime }\right) ^{2}\right\vert -\frac{3f}{2}\left\vert -2ff^{\prime }\left( g^{\prime }\right) ^{2}\right\vert
=0, 
\tag{3.6}
\end{equation}%
which yields two cases:
\begin{itemize}
\item[\textbf{Case a.}]
$f=f_{0}\neq 0$, $f_{0}\in \mathbb{R}$, is a solution for (3.6). If the surface is
spacelike, then (3.5) turns to 
\begin{equation}
2H_{0}=\frac{f_{0}g^{\prime \prime }}{\left[ 1-\left( f_{0}g^{\prime }\right)
^{2}\right] ^{\frac{3}{2}}}.  \tag{3.7}
\end{equation}%
By solving (3.7), we find 
\begin{equation*}
g\left( y\right) =\frac{1}{f_{0}H_{0}}\sqrt{\left( 2H_{0}y+\lambda
_{1}\right) ^{2}-1}+\lambda _{2},
\end{equation*}%
where $\lambda _{1}$ and $\lambda _{2}$ are some constants. Otherwise, i.e.
timelike situation yields%
\begin{equation}
2H_{0}=\frac{f_{0}g^{\prime \prime }}{\left[ \left( f_{0}g^{\prime }\right)
^{2}-1\right] ^{\frac{3}{2}}}.  \tag{3.8}
\end{equation}%
After solving (3.8), we obtain%
\begin{equation*}
g\left( y\right) =\frac{1}{f_{0}H_{0}}\sqrt{\left( 2H_{0}y+\lambda
_{3}\right) ^{2}+1}+\lambda _{4}
\end{equation*}%
for some constants $\lambda _{3},\lambda _{4}.$

\item[\textbf{Case b.}] $f^{\prime }\neq 0.$ If the surface is spacelike or
timelike, then (3.6) implies%
\begin{equation*}
1+2\left( fg^{\prime }\right) ^{2}=0,
\end{equation*}%
which is not possible.
\end{itemize}
\end {proof}

\section{Factorable Surfaces of Second Kind}

As in previous section, by assuming $K=const.\neq 0$ and $H=const.\neq 0,$
we try to describe the factorable graph surfaces of second kind in $\mathbb{G%
}_{3}^{1}$ given in explicit form $\Omega _{2}:x\left( y,z\right) =f\left(
y\right) g\left( z\right) .$ Therefore the following non-existence result can be stated:

\begin{theorem} There does not exist a factorable surface of second kind in $%
\mathbb{G}_{3}^{1}$ having non-zero constant Gaussian curvature.
\end{theorem}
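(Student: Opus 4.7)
The plan is to adapt the case-analysis strategy of the proof of Theorem 3.1. Parametrizing $\Omega_2$ by $r(y,z) = (f(y)g(z), y, z)$, one reads off $\mathfrak{g}_1 = f'g$, $\mathfrak{g}_2 = fg'$, $W^2 = \left|(fg')^2 - (f'g)^2\right|$, and $N = \tfrac{1}{W}(0, f'g, -fg')$. Because $y_{,ij} = z_{,ij} = 0$ identically, the coefficients of the second fundamental form collapse to $L_{ij} = -\varepsilon \mathfrak{g}_{i,j}/W$, and the equation $K = K_0 \neq 0$ reduces to
\[
c\left[(fg')^2 - (f'g)^2\right]^2 = (f'g')^2 - f f''\, g g'',
\]
where $c = \varepsilon K_0$ is a nonzero constant.

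If either $f$ or $g$ is constant, the right-hand side vanishes and $K \equiv 0$, so we may assume $f' \not\equiv 0$ and $g' \not\equiv 0$. Following Theorem 3.1 I would next treat the subcase $f' = f_0 \neq 0$ (so $f'' = 0$ and $f = f_0 y + \alpha$): the equation becomes $c[f^2(g')^2 - f_0^2 g^2]^2 = f_0^2(g')^2$, and viewed as a polynomial in $y$ its $y^4$-coefficient is $c f_0^4 (g'(z))^4$, which must vanish identically in $z$; hence $g' \equiv 0$, a contradiction. The symmetric subcase $g' = g_0 \neq 0$ with $f'' \not\equiv 0$ is dispatched the same way via the $z^4$-coefficient.

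In the generic case $f'' \not\equiv 0$ and $g'' \not\equiv 0$ I would divide the displayed equation by $f f''$ and regroup it to the separated form
\[
g(z)\, g''(z) \;=\; B_1(y)(g')^2 + B_2(y)(g')^4 + B_3(y)\, g^2(g')^2 + B_4(y)\, g^4,
\]
with $B_1 = (f')^2/(f f'')$, $B_2 = -c f^3/f''$, $B_3 = 2c f (f')^2/f''$, and $B_4 = -c(f')^4/(f f'')$. Differentiating in $y$ kills the left-hand side and gives
\[
B_1'(y)(g')^2 + B_2'(y)(g')^4 + B_3'(y)\, g^2(g')^2 + B_4'(y)\, g^4 \equiv 0.
\]
If the four $z$-functions $(g')^2,\ (g')^4,\ g^2(g')^2,\ g^4$ are linearly independent over $\mathbb{R}$, then each $B_i$ is constant, and combining $B_1$ constant with $B_3$ constant forces $f^2 = B_3/(2c B_1)$ constant, hence $f' \equiv 0$, a contradiction. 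The main obstacle is the linearly-dependent subcase: then $(g')^2$ must satisfy an algebraic relation of the form $(g')^2 = h(g)$, and substituting the consequence $g'' = \tfrac12 h'(g)$ back into the displayed equation converts it into an algebraic identity in $g$ whose coefficients depend on $y$; a further round of differentiation in $y$ then extracts relations among $f, f', f''$ that again force $f$ to be constant, completing the non-existence proof.
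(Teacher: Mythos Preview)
Your setup and the linear subcases ($f''=0$ or $g''=0$) are handled correctly, and your separated form
\[
gg''=B_1(g')^2+B_2(g')^4+B_3g^2(g')^2+B_4g^4
\]
agrees with the paper's equation (4.3) (up to sign conventions). The genuine gap is the ``linearly dependent'' branch, which you dispose of in two sentences. Two problems:

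\emph{First}, your proposed mechanism does not advance the argument. If $(g')^2=h(g)$, then substituting $g''=\tfrac12 h'(g)$ back gives $\tfrac12 g\,h'(g)=B_1h+B_2h^2+B_3g^2h+B_4g^4$, and differentiating this in $y$ returns exactly the relation $B_1'h+B_2'h^2+B_3'g^2h+B_4'g^4=0$ you already had; no new information about $f$ appears. So ``a further round of differentiation in $y$'' is circular.

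\emph{Second}, your expected endgame (``force $f$ to be constant'') is not what actually happens. The paper's proof shows that the dependent branch is where all the work lies: after two more differentiations one reaches an equation of the form $F_1G_1+F_2G_2=0$ and a three-way case split (their Cases a, b, c), each with subcases. In particular, Case b.2 produces a genuine candidate pair $f(y)=\lambda e^{\pm y}$, $g(z)=\mu e^{z}$ (i.e.\ $(g')^2=\lambda_4^{-1}g^2$, which is precisely a linear dependence among your four $z$-functions) that satisfies every separated constraint and is eliminated only by checking directly that it gives $K=0$. Your sketch neither discovers nor rules out this configuration. To complete your approach you would have to classify all algebraic relations $\alpha(g')^2+\beta(g')^4+\gamma g^2(g')^2+\delta g^4=0$, solve the resulting ODEs for $g$, and then feed each solution back into the $B_i$-system --- essentially reproducing the paper's case analysis.
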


\begin{proof} It is proved by contradiction. Then we suppose that $\Omega_{2}$ has the Gaussian curvature $K_{0}\neq 0$ in $\mathbb{G}_{3}^{1}.$ By a calculation, relating to the Gaussian curvature, we get%
\begin{equation}
K_{0}=\frac{fgf^{\prime \prime }g^{\prime \prime }-\left( f^{\prime
}g^{\prime }\right) ^{2}}{\left[ \left( fg^{\prime }\right) ^{2}-\left(
f^{\prime }g\right) ^{2}\right] ^{2}},  \tag{4.1}
\end{equation}%
where $f^{\prime }=\dfrac{df}{dy},$ $g^{\prime }=\dfrac{dg}{dz}$ and so on.
Hereinafter $f$ and $g$ must be non-constant functions so that $K_{0}$ does not
vanish. Point that the roles of $f$ and $g$ are symmetric and it is sufficient to
discuss the cases depending on $f.$ Thus, if $f^{\prime \prime }=0$ i.e. \linebreak $f^{\prime }=f_{0}\neq 0,$ then (4.1) turns a polynomial equation on $(f)$:%
\begin{equation}
\left[ K_{0}\left( g^{\prime }\right) ^{4}\right] f^{4}-\left[ 2K_{0}\left(
f_{0}gg^{\prime }\right) ^{2}\right] f^{2}+\left( f_{0}g\right) ^{4}+\left(
f_{0}g^{\prime }\right) ^{2}=0.  \tag{4.2}
\end{equation}%
The fact that the coefficients must be zero yields the contradiction $g^{\prime }= 0$. Hence $f$ is a non-linear function and, by symmetry, so is $g$. By dividing (4.1) with $ff^{\prime \prime }\left( g^{\prime
}\right) ^{2},$ we can write%
\begin{equation}
K_{0}\left[ \frac{f^{3}}{f^{\prime \prime }}\left( g^{\prime }\right) ^{2}-2%
\frac{f\left( f^{\prime }\right) ^{2}}{f^{\prime \prime }}g^{2}+\frac{\left(
f^{\prime }\right) ^{4}}{ff^{\prime \prime }}\left( \frac{g^{2}}{g^{\prime }}%
\right) ^{2}\right] =\frac{gg^{\prime \prime }}{\left( g^{\prime }\right)
^{2}}-\frac{\left( f^{\prime }\right) ^{2}}{ff^{\prime \prime }}.  \tag{4.3}
\end{equation}%
Put $f^{\prime }=p,$ $\dot{p}=\dfrac{dp}{df}=\dfrac{f^{\prime \prime }}{%
f^{\prime }}$ and $g^{\prime }=r,$ $\dot{r}=\dfrac{dr}{dg}=\dfrac{g^{\prime
\prime }}{g^{\prime }}$ in (4.3). Then the partial derivative of (4.3) with
respect to $g$ gives%
\begin{equation}
K_{0}\left[ 2\frac{f^{3}}{p\dot{p}}r\dot{r}-4\frac{fp}{\dot{p}}g+2\frac{p^{3}%
}{f\dot{p}}\left( \frac{g^{2}}{r}\right) \left\{ \frac{d}{dg}\left( \frac{%
g^{2}}{r}\right) \right\} \right] =\frac{d}{dg}\left( \frac{g\dot{r}}{r}%
\right) .  \tag{4.4}
\end{equation}%
The partial derivative of (4.4) with respect to $f$ yields%
\begin{equation}
r\dot{r}\left[ \frac{d}{df}\left( \frac{f^{3}}{p\dot{p}}\right) \right] -2g%
\left[ \frac{d}{df}\left( \frac{fp}{\dot{p}}\right) \right] +\left[ \frac{d}{%
df}\left( \frac{p^{3}}{f\dot{p}}\right) \right] \left[ \left( \frac{g^{2}}{r}%
\right) \frac{d}{dg}\left( \frac{g^{2}}{r}\right) \right] =0.  \tag{4.5}
\end{equation}%
By dividing $\left( 4.5\right) $ with $g$ and taking partial derivative with
respect to $g,$ we derive%
\begin{equation}
\underset{F_{1}\left( f\right) }{\underbrace{\left[ \frac{d}{df}\left( \frac{%
f^{3}}{p\dot{p}}\right) \right] }}\overset{G_{1}\left( g\right) }{\overbrace{%
\left[ \frac{d}{dg}\left( \frac{r\dot{r}}{g}\right) \right] }}+\underset{%
F_{2}\left( f\right) }{\underbrace{\left[ \frac{d}{df}\left( \frac{p^{3}}{f%
\dot{p}}\right) \right] }}\overset{G_{2}\left( g\right) }{\overbrace{\left[ 
\frac{d}{dg}\left\{ \left( \frac{g}{r}\right) \frac{d}{dg}\left( \frac{g^{2}%
}{r}\right) \right\} \right] }}=0.  \tag{4.6}
\end{equation}%
We have to distinguish several cases:

\begin{itemize}
\item[\textbf{Case a.}]
$F_{1}=0.$ Then $f^{3}=\lambda _{1}p\dot{p},$ $\lambda _{1} \in \mathbb{R},$ $\lambda
_{1}\neq 0.$ We have again two cases:

\begin{itemize}
\item[\textbf{Case a.1.}]
$F_{2}=0,$ namely $p^{3}=\lambda _{2}f\dot{p},$ $\lambda _{2} \in \mathbb{R},$ $\lambda
_{2}\neq 0.$ Considering these in (4.5) implies $fp=\lambda _{3}\dot{p},$ $\lambda _{3} \in \mathbb{R},$ $%
\lambda _{3}\neq 0.$ Substituting these into $\left( 4.3\right) $ yields%
\begin{equation}
K_{0}\left[ \lambda _{1}r^{2}-2\lambda _{3}g^{2}+\lambda _{2}\left( \frac{%
g^{2}}{r}\right) ^{2}\right] -\frac{g\dot{r}}{r}=\frac{-\lambda _{2}}{p^{2}}.
\tag{4.7}
\end{equation}%
The left side of (4.7) is either a function of $g$ or a constant, however
other side is a non-constant function of $f.$ This is not possible.

\item[\textbf{Case a.2.}] $G_{2}=0.$ It implies $\dfrac{d}{dg}\left( \dfrac{g^{2}}{r}%
\right) =\dfrac{\lambda _{4}r}{g},$ $\lambda _{4} \in \mathbb{R}.$ By considering this one into (4.5)
together with the assumption of Case a, we conclude%
\begin{equation}
\left[ \frac{d}{df}\left( \frac{p}{f}\right) \right] \left[ 1-\lambda
_{4}\left( \frac{p}{f}\right) ^{2}\right] =0.  \tag{4.8}
\end{equation}%
If $p=\lambda _{5}f,$ $\lambda _{5} \in \mathbb{R},$ $\lambda _{5}\neq 0,$ in (4.8) then we have $\dot{p}%
=\lambda _{5}$. Combining it with the assumption of Case a gives $%
f^{2}=\lambda _{1}\lambda _{5}^{2}$ that contradicts with $K_{0}\neq 0.$
\end{itemize}

\item[\textbf{Case b.}] $G_{1}=0.$ Hence $r\dot{r}=\lambda _{1}g,$ $\lambda _{1} \in \mathbb{R},$ $\lambda
_{1}\neq 0.$ We have two cases:

\begin{itemize}
\item[\textbf{Case b.1.}] 
$F_{2}=0,$ i.e. $p^{3}=\lambda _{2}f\dot{p},$ $\lambda _{2} \in \mathbb{R},$ $\lambda
_{2}\neq 0.$ Then (4.5) follows%
\begin{equation}
\left[ \frac{d}{df}\left( \frac{f}{p}\right) \right] \left[ \lambda
_{1}\left( \frac{f}{p}\right) ^{2}-1\right] =0.  \tag{4.9}
\end{equation}%
If $p=\lambda _{3}f,$ $\lambda _{3} \in \mathbb{R},$ $\lambda _{3}\neq 0,$ in (4.8) then we get $\dot{p}%
=\lambda _{3}.$ Comparing this one with the assumption of Case b.1 gives $%
f^{2}=\lambda _{2},$ which is no possible since $K_{0}\neq 0.$

\item[\textbf{Case b.2.}]  $G_{2}=0.$ It follows 
\begin{equation}
\left( \frac{g}{r}\right) \frac{d}{dg}\left( \frac{g^{2}}{r}\right) =\lambda
_{4}, \text{ } \lambda _{4} \in \mathbb{R}.  \tag{4.10}
\end{equation}%
An integration of (4.10) with respect to $g$ gives%
\begin{equation}
r=\frac{g^{2}}{\sqrt{\lambda _{4}g^{2}+\lambda _{5}}},  \text{ } \lambda _{5} \in \mathbb{R}, \tag{4.11}
\end{equation}%
where $\lambda _{4}$ and $\lambda _{5}$ are not equal to zero together.
After taking derivative of (4.11) with respect to $g$ and producting with $r,$ we
conclude%
\begin{equation}
r\dot{r}=\frac{\lambda _{4}g^{5}+2\lambda _{5}g^{3}}{\left( \lambda
_{4}g^{2}+\lambda _{5}\right) ^{2}}.  \tag{4.12}
\end{equation}%
Due to the assumption of the Case b, (4.12) turns to the following
polynomial equation on $g:$%
\begin{equation}
\left( \lambda _{4}-\lambda _{1}\lambda _{4}^{2}\right) g^{5}+2\left( \lambda
_{5}-\lambda _{1}\lambda _{4}\lambda _{5}\right) g^{3}-\left(\lambda _{1}\lambda
_{5}^{2}\right)g=0.  \tag{4.13}
\end{equation}%
Since $\lambda _{1}\neq 0,$ we get $1=\lambda _{1}\lambda _{4}$ and $\lambda _{5}=0.$ It follows from $\left( 4.11\right) $ that $r=\left(\lambda _{4} \right )^{\frac{-1}{2}}g$. Then by substituting it into (4.3), we obtain%
\begin{equation}
K_{0}\left[ \frac{f^{3}}{\lambda _{4}p\dot{p}}-2\frac{fp}{\dot{p}}+\lambda _{4}\frac{p^{3}}{f\dot{p}}%
\right] g^{2}+\frac{p}{f\dot{p}} -1=0.  \tag{4.14}
\end{equation}%
This polynomial equation leads to 
\begin{equation}
f\dot{p}=p  \tag{4.15}
\end{equation}
and 
\begin{equation}
\frac{f^{3}}{\lambda _{4}p\dot{p}}-2\frac{fp}{\dot{p}}+\lambda _{4}\frac{p^{3}}{f\dot{p}}=0 .
\tag{4.16}
\end{equation}%
Substituting (4.15) into (4.16) gives $p=\pm \left(\lambda _{4} \right )^{\frac{-1}{2}} f$ or $f\left( y\right)
=\lambda _{6}\exp \left( \pm y\right) ,$ $\lambda _{6}\in \mathbb{R}$, $\lambda _{6}\neq 0.$ Further, since $r=\left(\lambda _{4} \right )^{\frac{-1}{2}}g$, we
have $g\left( z\right) =\lambda _{7}\exp \left( z\right) ,$ $\lambda _{7}\in \mathbb{R}$, $\lambda
_{7}\neq 0$. However these lead the surface to be flat, i.e. $K_{0}=0$, which
is not our case.
\end{itemize}

\item[\textbf{Case c.}] $F_{1}G_{1}\neq 0.$ Then (4.6) can be rewritten as%
\begin{equation}
\frac{F_{1}\left( f\right) }{F_{2}\left( f\right) }+\frac{G_{2}\left(
g\right) }{G_{1}\left( g\right) }=0,  \tag{4.17}
\end{equation}%
which implies 
\begin{equation}
\frac{f^{3}}{p\dot{p}}=\lambda _{1}\frac{p^{3}}{f\dot{p}}+\lambda _{2},\text{
and\ }\left( \frac{g}{r}\right) \frac{d}{dg}\left( \frac{g^{2}}{r}\right)
=-\lambda _{1}\frac{r\dot{r}}{g}+\lambda _{3},  \tag{4.18}
\end{equation}%
where $\lambda _{1},\lambda _{2},\lambda _{3}\in \mathbb{R}$, $\lambda _{1}\neq 0$. Substituting
(4.18) into (4.5) gives%
\begin{equation}
2\frac{fp}{\dot{p}}=\lambda _{3}\frac{p^{3}}{f\dot{p}}+\lambda _{4}, \text{ } \lambda_{4} \in \mathbb{R}. 
\tag{4.19}
\end{equation}%
Comparing (4.19) with the first equality in (4.18) leads to%
\begin{equation}
\left\{ 
\begin{array}{l}
f^{4}-\lambda _{1}p^{4}=\lambda _{2}fp\dot{p} \\ 
2f^{2}p^{2}-\lambda _{3}p^{4}=\lambda _{4}fp\dot{p}.%
\end{array}%
\right.  \tag{4.20}
\end{equation}%
It is clear that $\lambda _{3}$ and $\lambda _{4}$ cannot be zero together.
Without lose of generality, we may assume $\lambda _{4}=1.$ Then (4.20)
turns to%
\begin{equation}
f^{4}-2\lambda _{2}f^{2}p^{2}+\left( \lambda _{2}\lambda _{3}-\lambda
_{1}\right) p^{4}=0  \tag{4.21}
\end{equation}%
or%
\begin{equation}
\left( \frac{f}{p}\right) ^{2}+\left( \lambda _{2}\lambda _{3}-\lambda
_{1}\right) \left( \frac{f}{p}\right) ^{-2}=2\lambda _{2}.  \tag{4.22}
\end{equation}%
Taking derivative of (4.22) with respect to $f$ leads to%
\begin{equation}
\frac{d}{df}\left( \frac{f}{p}\right) \left[ 1-\left( \lambda _{2}\lambda
_{3}-\lambda _{1}\right) \left( \frac{f}{p}\right) ^{-4}\right] =0. 
\tag{4.23}
\end{equation}%
If the ratio $f/p$ is constant, i.e. $p=\lambda _{5}f,$ $\lambda _{5} \in \mathbb{R},$ $\lambda _{5}\neq 0,$
then the second equality in (4.20) implies%
\begin{equation*}
\left( 2\lambda _{5}^{2}-\lambda _{3}\lambda _{5}^{4}\right) f^{4}=\lambda _{5}^{2}f^{2},
\end{equation*}%
which is not possible. This completes the proof.
\end{itemize}
\end {proof}

\begin{theorem} Let a factorable surface of second kind in $\mathbb{G}%
_{3}^{1}$ have non-zero constant mean curvature $H_{0}.$ Then we have:%
\begin{equation*}
x\left( y,z\right) =\lambda _{1}\exp \left(\lambda _{2}y+ \frac{\lambda _{2}}{2H_{0}}\sqrt{%
\left( 2H_{0}z+\lambda _{3}\right) ^{2}\pm 1}\right) ,
\end{equation*}%
where $"\pm "$ happens plus (resp. minus) when the surface is timelike
(resp. spacelike). Further, $\lambda _{1},\lambda _{2}$ are
non-zero constans and $\lambda _{3}$ some constant.
\end{theorem}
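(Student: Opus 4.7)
My plan is to mimic the strategy of Theorems~2 and~3: translate the hypothesis $H=H_{0}$ into a single PDE on $f$ and $g$, and then pin down the solutions by repeated partial differentiation and case analysis.

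First, from the formulas of Section~2 applied to $\Omega_{2}:x(y,z)=f(y)g(z)$ (with $\mathfrak{g}_{1}=f'g$, $\mathfrak{g}_{2}=fg'$ and $y_{,ij}=z_{,ij}=0$), a direct computation gives
\begin{equation*}
2H_{0}\bigl|(fg')^{2}-(f'g)^{2}\bigr|^{3/2}=fg\Bigl[(g')^{2}\bigl(ff''-(f')^{2}\bigr)+(f')^{2}\bigl(gg''-(g')^{2}\bigr)\Bigr].
\end{equation*}
Using the identities $ff''-(f')^{2}=f^{2}(\ln f)''$ and $gg''-(g')^{2}=g^{2}(\ln g)''$ and introducing the logarithmic derivatives $P(y)=f'/f$, $Q(z)=g'/g$, the equation collapses (after dividing by $f^{3}g^{3}$) to the clean master equation
\begin{equation*}
2H_{0}\,|Q^{2}-P^{2}|^{3/2}=Q^{2}P'+P^{2}Q',
\end{equation*}
in which the sign of $Q^{2}-P^{2}$ is constant on the surface and is fixed by the causal character $\varepsilon$.

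Next, since $H_{0}\neq 0$, neither $P$ nor $Q$ can vanish identically (e.g.\ $P\equiv 0$ reduces the equation to $2H_{0}|Q|^{3}=0$, hence $Q\equiv 0$ too, and the surface degenerates to a plane). The decisive bifurcation is on whether $P$ is constant in $y$. Assuming $P\equiv \lambda_{2}\in \mathbb{R}\setminus\{0\}$ gives $f(y)=\lambda_{1}e^{\lambda_{2}y}$ and reduces the master equation to the separable ODE
\begin{equation*}
2H_{0}\,|Q^{2}-\lambda_{2}^{2}|^{3/2}=\lambda_{2}^{2}\,Q'.
\end{equation*}
Using the antiderivatives $\int dQ/(Q^{2}-a^{2})^{3/2}=-Q/\bigl(a^{2}\sqrt{Q^{2}-a^{2}}\bigr)$ and $\int dQ/(a^{2}-Q^{2})^{3/2}=Q/\bigl(a^{2}\sqrt{a^{2}-Q^{2}}\bigr)$, one integration gives $Q=\lambda_{2}w/\sqrt{w^{2}\pm 1}$ with $w=2H_{0}z+\lambda_{3}$, the $+$ sign occurring in the timelike case ($Q^{2}<P^{2}$) and the $-$ sign in the spacelike case ($Q^{2}>P^{2}$). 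A second integration of $g'/g=Q$ yields $g(z)=\exp\!\bigl(\tfrac{\lambda_{2}}{2H_{0}}\sqrt{(2H_{0}z+\lambda_{3})^{2}\pm 1}\bigr)$ up to a multiplicative constant, which combined with $f(y)=\lambda_{1}e^{\lambda_{2}y}$ reproduces the formula of the theorem.

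The main obstacle is ruling out the remaining case $P'\not\equiv 0$ (and, by the symmetry of the master equation, the analogous possibility $Q'\not\equiv 0$ with $P$ non-constant). My plan is to square the master equation to eliminate the absolute value, producing the polynomial identity
\begin{equation*}
4H_{0}^{2}(Q^{2}-P^{2})^{3}=(Q^{2}P'+P^{2}Q')^{2},
\end{equation*}
and then to differentiate partially in $y$ and $z$ as in the ``Case a--c'' schema of Theorem~3. Each differentiation yields a separated relation of the form $\sum_{i}F_{i}(P,P',P'',\ldots)\,G_{i}(Q,Q',Q'',\ldots)=0$; a careful case analysis on which $F_{i}$ or $G_{i}$ vanish reveals that every sub-case either contradicts $H_{0}\neq 0$ or forces $f$ or $g$ to be constant (already excluded). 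This combinatorial elimination is the tedious core of the argument, and its structure parallels almost verbatim the proof of Theorem~3; once it is carried out, only the family constructed above survives, completing the proof.
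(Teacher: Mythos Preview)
Your reformulation through the logarithmic derivatives $P=f'/f$ and $Q=g'/g$ is correct and is in fact a cleaner packaging of what the paper does: the paper divides (4.24) by $fg(f'g')^{2}$ and passes to the variables $p=f'$, $\dot p=f''/f'$, $r=g'$, $\dot r=g''/g'$, arriving at exactly your master equation in the guise $2H_{0}(Q^{2}-P^{2})^{3/2}/(P^{2}Q^{2})=P'/P^{2}+Q'/Q^{2}$. Your Case~a ($P\equiv\lambda_{2}$) and the ensuing two integrations also agree with the paper's Case~a and produce the stated family.

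The gap is Case~b. You do not prove it; you only announce a plan to square and then run the multi-branch elimination of Theorem~4.1. Apart from being unexecuted, this plan misjudges the difficulty: the Case~a--c cascade in Theorem~4.1 is forced by the \emph{quartic} left-hand side of (4.1), whereas here the structure is much softer. The paper disposes of Case~b in two strokes, and your own variables make this transparent. Differentiating the master equation (written as $2H_{0}(Q^{2}-P^{2})^{3/2}P^{-2}Q^{-2}=P'/P^{2}+Q'/Q^{2}$) once in $y$ and dividing by $P'\neq0$ gives
\[
(Q^{2}-P^{2})^{1/2}\Bigl(2+\frac{P^{2}}{Q^{2}}\Bigr)=\frac{-P^{3}\,(P'/P^{2})'}{2H_{0}\,P'},
\]
whose right side depends on $y$ alone; differentiating once more in $z$ (using $Q'\neq0$, by the symmetric argument) kills that side and yields the purely algebraic relation
\[
2Q^{4}-P^{2}Q^{2}+2P^{4}=0,
\]
which is the paper's (4.31) in your notation. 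As a quadratic in $Q^{2}$ its discriminant is $P^{4}-16P^{4}=-15P^{4}<0$, so no real $(P,Q)$ with $P\neq0$ satisfies it, and Case~b is excluded. So rather than squaring and embarking on a Theorem~4.1-style case tree, two targeted differentiations already finish the job.
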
 

\begin{proof} The proof is only done for spacelike situation since the calculations are almost same for other situation. Then we have
\begin{equation*}
\left( fg^{\prime }\right) ^{2}-\left( f^{\prime }g\right) ^{2}>0
\end{equation*}%
for all pairs $\left( y,z\right) .$ Since the mean curvature is constant $%
H_{0}\neq 0,$ by a calculation, we deduce
\begin{equation}
2H_{0}\left[ \left( fg^{\prime }\right) ^{2}-\left( f^{\prime
}g\right) ^{2} \right] ^{\frac{3}{2}}=\left( fg^{\prime }\right)
^{2}f^{\prime \prime }g-2fg\left( f^{\prime }g^{\prime }\right) ^{2}+\left(
f^{\prime }g\right) ^{2}fg^{\prime \prime }.  \tag{4.24}
\end{equation}%
Note that $f$ is not a constant function since $H_{0}\neq 0$ and, by
symmetry, neither is $g.$ Then dividing (4.24) with $fg\left( f^{\prime
}g^{\prime }\right) ^{2}$ yields%
\begin{equation}
2H_{0}\left[ \left( \frac{f}{f^{\prime }}\right) ^{\frac{4}{3}}\left( \frac{%
g^{\prime }}{g}\right) ^{\frac{2}{3}}-\left( \frac{f^{\prime }}{f}\right)
^{\frac{2}{3}}\left( \frac{g}{g^{\prime }}\right) ^{\frac{4}{3}}\right] ^{\frac{3}{2}}=%
\frac{ff^{\prime \prime }}{\left( f^{\prime }\right) ^{2}}+\frac{gg^{\prime
\prime }}{\left( g^{\prime }\right) ^{2}}-2.  \tag{4.25}
\end{equation}%
Let us put $f^{\prime }=p,$ $\dot{p}=\dfrac{dp}{df}=\dfrac{f^{\prime \prime }%
}{f^{\prime }}$ and $g^{\prime }=r,$ $\dot{r}=\dfrac{dr}{dg}=\dfrac{%
g^{\prime \prime }}{g^{\prime }}$ in (4.25). Thus (4.25) can be rewritten as%
\begin{equation}
2H_{0}\left[ \left( \frac{f}{p}\right) ^{\frac{4}{3}}\left( \frac{r}{g}%
\right) ^{\frac{2}{3}}-\left( \frac{p}{f}\right) ^{\frac{2}{3}}\left( \frac{g%
}{r}\right) ^{\frac{4}{3}}\right] ^{\frac{3}{2}}=\frac{f\dot{p}}{p}+\frac{g%
\dot{r}}{r}-2.  \tag{4.26}
\end{equation}%
The partial derivative of (4.26) with respect to $f$ gives%
\begin{equation}
\left. 
\begin{array}{l}
2H_{0}\left[ \left( \frac{f}{p}\right) ^{\frac{4}{3}}\left( \frac{r}{g}%
\right) ^{\frac{2}{3}}-\left( \frac{p}{f}\right) ^{\frac{2}{3}}\left( \frac{g%
}{r}\right) ^{\frac{4}{3}}\right] ^{\frac{1}{2}}\left[ 2\left( \frac{r}{g}%
\right) ^{\frac{2}{3}}+\left( \frac{p}{f}\right) ^{2}\left( \frac{g}{r}%
\right) ^{\frac{4}{3}}\right] \frac{d}{df}\left( \frac{f}{p}\right) = \\ 
=\frac{d}{df}\left( \frac{f\dot{p}}{p}\right) \left( \frac{p}{f}\right) ^{%
\frac{1}{3}}.%
\end{array}%
\right.   \tag{4.27}
\end{equation}%
If $\dot{p}=0$, then (4.27) reduces to%
\begin{equation*}
2+\left( \frac{p}{f}\right) ^{2}\left( \frac{g}{r}\right) ^{2}=0,
\end{equation*}%
which is not possible. Thus $p$ is not constant function and, by
symmetry, so is $r$. In addition, we have to consider two cases in order to solve (4.27):

\begin{itemize}
\item[\textbf{Case a.}] 
$p=\lambda _{1}f,$ $\lambda _{1}\neq 0,$ is a solution for
(4.27). Substituting this one into (4.26) gives%
\begin{equation*}
2H_{0}\left[ \lambda _{1}^{-\frac{4}{3}}\left( \frac{r}{g}\right) ^{\frac{2}{%
3}}-\lambda _{1}^{\frac{2}{3}}\left( \frac{g}{r}\right) ^{\frac{4}{3}}\right]
^{\frac{3}{2}}=\frac{g\dot{r}}{r}-1
\end{equation*}%
or%
\begin{equation*}
2H_{0}\left[ \left( \frac{r}{g}\right) ^{2}-\lambda _{1}^{2}\right] ^{\frac{3%
}{2}}=\lambda _{1}^{2}\left[ \frac{r\dot{r}}{g}-\left( \frac{r}{g}\right)
^{2}\right] .
\end{equation*}%
The last equality can be rearranged as%
\begin{equation}
2H_{0}=\frac{\lambda _{1}^{2}\left( \dfrac{g^{\prime }}{g}\right) ^{\prime }%
}{\left[ \left( \dfrac{g^{\prime }}{g}\right) ^{2}-\lambda _{1}^{2}\right] ^{%
\frac{3}{2}}}.  \tag{4.28}
\end{equation}%
An integration of (4.28) with respect to $z$ yields%
\begin{equation*}
2H_{0}z+\lambda _{2}=\frac{-\dfrac{g^{\prime }}{g}}{\sqrt{\left( \dfrac{%
g^{\prime }}{g}\right) ^{2}-\lambda _{1}^{2}}}
\end{equation*}%
or%
\begin{equation}
\dfrac{g^{\prime }}{g}=\frac{\lambda _{1}\left( 2H_{0}z+\lambda _{2}\right) 
}{\sqrt{\left( 2H_{0}z+\lambda _{2}\right) ^{2}-1}}.  \tag{4.29}
\end{equation}%
An again integration of (4.29) with respect to $z$ leads to%
\begin{equation*}
g\left( z\right) =\lambda _{3}\exp \left( \frac{\lambda _{1}}{2H_{0}}\sqrt{%
\left( 2H_{0}z+\lambda _{2}\right) ^{2}-1}\right) ,\text{ }\lambda _{3}\neq
0.
\end{equation*}%
Due to the assumption of Case a, we conclude $f\left( y\right) =\lambda
_{4}\exp \left( \lambda _{1}y\right) ,$ $\lambda _{4}\neq 0$, which gives
the assertion of the theorem.

\item[\textbf{Case b.}] $\dfrac{d}{df}\left( \dfrac{f}{p}\right) \neq 0.$ By
symmetry, we deduce $\dfrac{d}{dg}\left( \dfrac{g}{r}\right) \neq 0.$ Then
(4.27) can be rewritten as%
\begin{equation}
\left[ \left( \frac{r}{g}\right) ^{2}-\left( \frac{p}{f}\right) ^{2}\right]
^{\frac{1}{2}}\left[ 2+\left( \frac{p}{f}\right) ^{2}\left( \frac{r}{g}%
\right) ^{-2}\right] =\frac{\frac{d}{df}\left( \dfrac{f\dot{p}}{p}\right)
\left( \dfrac{p}{f}\right) }{2H_{0}\frac{d}{df}\left( \dfrac{f}{p}\right) }.
\tag{4.30}
\end{equation}%
The partial derivative of (4.30) with respect to $g$ leads to%
\begin{equation}
2\left( \frac{r}{g}\right) ^{4}-\left( \frac{r}{g}\right) ^{2}\left( \frac{p%
}{f}\right) ^{2}+2\left( \frac{p}{f}\right) ^{4}=0.  \tag{4.31}
\end{equation}%
Since the ratios $\dfrac{p}{f}$ and $\dfrac{r}{g}$ are not constant, (4.31)
presents a polynomial equation which yields a contradiction. Therefore the proof is completed.
\end{itemize}
\end{proof}

\end{document}